\newtheorem{thm}{Theorem}[section]
\newtheorem{cor}[thm]{Corollary}
\theoremstyle{definition}
\newtheorem{rem}[thm]{Remark}
\newtheorem{defi}[thm]{Definition}
\newenvironment{tight_enumerate}{
\begin{enumerate}
  \setlength{\itemsep}{0pt}
  \setlength{\parskip}{0pt}
}{\end{enumerate}}
\newenvironment{tight_itemize}{
\begin{itemize}
  \setlength{\itemsep}{0pt}
  \setlength{\parskip}{0pt}
}{\end{itemize}}
\begin{document}


\vspace*{0.5cm}


\begin{center}
{\Large\bf Construction of directed strongly regular graphs via their orbit matrices and genetic algorithm}
\end{center}



\vspace*{0.5cm}

\begin{center}
    Dean Crnkovi\'c  \\
		(deanc@math.uniri.hr, ORCID: 0000-0002-3299-7859)\\				
	Andrea \v{S}vob\\
 		(asvob@math.uniri.hr, ORCID: 0000-0001-6558-5167\\
		and\\
		Tin Zrinski \\ 
		(tin.zrinski@math.uniri.hr, ORCID: 0000-0003-3620-2176)\\
		\bigskip
{\textit{Faculty of Mathematics, University of Rijeka}}\\
{\textit{Radmile Matej\v ci\'c 2, 51000 Rijeka, Croatia}}\\

\end{center}

\vspace*{0.5cm}

\begin{abstract}
In this paper, we introduce orbit matrices of directed strongly regular graphs ($DSRG$s). Further, we propose a method of constructing directed strongly regular graphs with prescribed automorphism group using genetic algorithm. In the construction, we use orbit matrices, i.e. quotient matrices related to equitable partitions of adjacency matrices of putative directed strongly regular graphs induced by an action of a prescribed automorphism group. Further, we apply this method to construct directed strongly regular graphs with parameters $(36,10,5,2,3)$, $(52,12,3,2,3)$, $(52,15,6,5,6)$, $(55,20,8,6,8)$ and $(55,24,12,11,10)$.
\end{abstract}

\bigskip

\noindent {\bf AMS classification numbers:} 05C20, 05E18, 05E30.

\noindent {\bf Keywords:} directed strongly regular graph, genetic algorithm, automorphism group, orbit matrix.

\vspace*{0.7 cm}

\noindent {\bf Acknowledgement} \\
This work has been fully supported by {\rm C}roatian Science Foundation under the projects 4571 and 5713. 

\section{Introduction} \label{intro}

In this paper, we are interested in constructing directed strongly regular graphs. We introduce orbit matrices of directed strongly regular graphs and describe a method that uses a genetic algorithm to construct directed strongly regular graphs 
with a prescribed automorphism group. In the construction, we use orbit matrices with respect to the prescribed automorphism group. This method is applied for construction of directed strongly regular graphs with parameters $(36,10,5,2,3)$, $(52,12, 3, 2, 3)$, $(52, 15, 6, 5, 6)$, $(55, 20, 8, 6, 8)$ and $(55, 24, 12, 11, 10)$. 

The paper is organized as follows. Section \ref{prelim} provides the necessary definitions and notation used throughout. In Section \ref{orb-mat}, we introduce a definition of orbit matrices of directed strongly regular graphs and give its basic properties, and in Section \ref{gen-alg} we give a brief overview of genetic algorithms. In Section \ref{construction}, we describe the algorithm for constructing directed strongly regular graphs that is used in this paper, and in Section \ref{newDSRG} we apply this algorithm to construct directed strongly regular graphs with parameters $(36,10,5,2,3)$, $(52,12, 3, 2, 3)$, $(52, 15, 6, 5, 6)$, $(55, 20, 8, 6, 8)$ and $(55, 24, 12, 11, 10)$. 

For the computations in this paper we used programs written in GAP \cite{gap} and Magma \cite{magma}.

\section{Preliminaries} \label{prelim}

A \textit{directed graph} or a \textit{digraph} is an ordered pair $\Gamma=(\mathcal{V},\mathcal{E})$, where $\mathcal{V}$ is a non-empty finite set and $\mathcal{E} \subseteq \{ (x,y) |\ x,y \in V \}$. 

The elements of $\mathcal{V}$ are called the \textit{vertices} of the graph $\Gamma$, and the elements of $\mathcal{E}$ are called the \textit{arcs} of $\Gamma$. We say that vertices $x$ and $y$ are adjacent if there is an arc $(x,y) \in \mathcal{E}$, which we denote by $x \to y$. We say that $x$ is the \textit{source vertex} of the arc $a=(x,y)$, and $y$ is the \textit{target vertex} of $a$. 

The out-degree of a vertex $x\in \mathcal{V}$ is the number of arcs $x\to z$ and the in-degree of a vertex $x\in \mathcal{V}$ is the number of arcs $w\to x$. A directed graph $\mathcal{V}$ is called \textit{$k-$regular} if each vertex $x\in \mathcal{V}$ has the out-degree and the in-degree equal to $k$. A directed graph $\mathcal{V}$ is called \textit{simple} if there are no arcs $x\to x$ (loops), for any $x\in \mathcal{V}$, and no multiple arcs with same source and target vertices. 

The \textit{adjacency matrix} $A$ of a directed graph $\Gamma$ on $v$ vertices $x_1, x_2, \ldots, x_v$ is a $v\times v$ matrix $A =[a_{ij}]$ such that $a_{ij}$ is the number of arcs $x_i \to x_j$.

Let $\Gamma_1=(\mathcal{V}_1,\mathcal{E}_1)$ and $\Gamma_2=(\mathcal{V}_2,\mathcal{E}_2)$ be two directed graphs. A bijective mapping $f:\mathcal{V}_1 \to \mathcal{V}_2$ is an \textit{isomorphism} from graph $\Gamma_1$ to graph $\Gamma_2$ if for every $x, y\in \mathcal{V}$ the following holds:
$$(x,y) \in \mathcal{E}_1 \text{ if and only if } (f(x), f(y)) \in \mathcal{E}_2.$$
An isomorphism of a directed graph $\Gamma=(\mathcal{V},\mathcal{E})$ to itself is an \textit{automorphism} of $\Gamma$. The set of all automorphisms of a directed graph $\Gamma$ forms a group called the \textit{full automorphism group} of ${\Gamma}$ and denoted by $Aut({\Gamma})$.

Duval \cite{duval} introduced the following definition. Let $\Gamma=(\mathcal{V},\mathcal{E})$ be a simple directed $k$-regular graph with $v$ vertices. We say that $\Gamma$ is a \textit{directed strongly regular graph} with parameters $(v,k,t,\lambda,\mu)$ if the number of directed paths of length 2 from a vertex $x$ to a vertex $y$ is 
\begin{itemize}
	\item $\lambda$ if there is an arc $x \to y$,
	\item $\mu$ if there is no arc $x \to y$, 
	\item $t$ if $x=y$.
\end{itemize} 
Directed strongly regular graphs with parameters $(v,k,t,\lambda,\mu)$ are denoted by $DSRG(v,k,t,\lambda,\mu)$.

Adjacency matrix $A$ of a directed strongly regular graph with parameters $(v,k,t,\lambda,\mu)$ satisfies: 
$$A^2+(\mu-\lambda)A-(t-\mu)I=\mu J,$$
$$AJ=JA=kJ,$$
where $I$ is the identity matrix of order $v$ and $J$ is the $v\times v$ matrix of all 1's.

It is obvious that a directed strongly regular graph with $t=k$ is a strongly regular graph.

A list of known directed strongly regular graphs with their parameters and a list of parameters that satisfy the necessary conditions for existence of a directed strongly regular graphs, together with information about constructions and non-existence of directed strongly regular graphs on $v$ vertices, $v\leq 110$, can be found in \cite{DSRGs}.

\section{Orbit matrices of directed strongly regular graphs} \label{orb-mat}

A partition $\Pi =\{ C_0,C_1,...,C_{t-1} \}$ of the vertices of a graph $\Gamma$ is called {\it equitable} (or regular) if for every pair of (not necessarily distinct) indices
$i,j \in \{0,1,...,t-1 \}$ there  is  a  nonnegative  integer $b_{ij}$ such  that  each  vertex $x \in C_i$ is adjacent with $b_{ij}$ vertices in $C_j$, regardless of the choice of $x$.  
The $t \times t$ matrix $B = [b_{ij}]$ is called a {\it quotient matrix} of $\Gamma$ with respect to the equitable partition $\Pi$.

Let a group $G$ act as an automorphism group of a directed strongly regular graph $\Gamma$ with parameters $(v,k,t,\lambda,\mu)$. 
Further, let $O_1,O_2, \ldots, O_b$ be the $G$-orbits on the set of vertices of $\Gamma$, and $|O_i|=n_i$ be the corresponding orbit lengths, for $i=1, \ldots,b$.

Let $A$ be the adjacency matrix of $\Gamma$. The orbits $O_1,O_2, \ldots, O_b$ divide the matrix $A$ into submatrices $A_{ij}$. This partition of the matrix $A$ is equitable, i.e. the row and column sum of $A_{ij}$ is constant, for $1 \le i,j \le b$. Denote by $r_{ij}$ the row sum of $A_{ij}$, and by $c_{ij}$ the column sum of $A_{ij}$. Then
$$r_{ij}n_i=c_{ij}n_j.$$    
Let us define $b \times b$ matrices $R=[r_{ij}]$ and $C=[c_{ij}]$. The matrix $R$ is called the row orbit matrix of $\Gamma$ with respect to the action of $G$, and $C$ is called the column orbit matrix of $\Gamma$ with respect to the action of $G$. Clearly, $R=C^{\top}$.

For a vertex $u$ denote by $N^+(u)$ the out-neighbourhood of $u$, i.e. the set of all vertices $v$ such that there is an arc $u \rightarrow v$ in $\Gamma$. Further, denote by $N^-(u)$ the in-neighbourhood of $u$, i.e. the set of all vertices $v$ such that there is an arc $v \rightarrow u$. 
Let $x$ be an element of the orbit $O_i$, and let $y_s$ be a representative of the orbit $O_s$, for $s = 1, \ldots, b$. Then the following holds.

\begin{align*}
\sum_{s=1}^b  r_{is} r_{sj}
&=\sum_{s=1}^b  | N^+(x) \cap O_s| \cdot | N^+(y_s) \cap O_j|
 =\sum_{s=1}^b | N^+(x) \cap O_s | \cdot \sum_{y \in O_j} | N^+(y_s) \cap \{ y \}| \\
&=\sum_{s=1}^b | N^+(x) \cap O_s | \cdot \sum_{y \in O_j} |\{ y_s \} \cap N^-(y)|
 =\sum_{s=1}^b \sum_{u \in N^+(x) \cap O_s} \sum_{y \in O_j} |\{ u \} \cap N^-(y)| \\
&=\sum_{s=1}^b \sum_{y \in O_j} \sum_{u \in N^+(x) \cap O_s} |\{ u \} \cap N^-(y)|
 =\sum_{s=1}^b \sum_{y \in O_j} | N^+(x) \cap O_s \cap N^-(y)| \\
&=\sum_{y \in O_j} \sum_{s=1}^b | N^+(x) \cap O_s \cap N^-(y)|
 =\sum_{y \in O_j} |N^+(x) \cap N^-(y)|.
\end{align*}

If $i \neq j$, then
\begin{equation} \label{eq-row-prod-1}
\sum_{y \in O_j} |N^+(x) \cap N^-(y)| = r_{ij} \lambda + (n_j - r_{ij}) \mu,
\end{equation}

and if $i = j$, then
\begin{equation} \label{eq-row-prod-2}
\sum_{y \in O_j} |N^+(x) \cap N^-(y)| = t + r_{ij} \lambda + (n_j - r_{ij} -1) \mu.
\end{equation}

That leads us to the following theorem.

\begin{thm} \label{thm-row-om}
Let $\Gamma$ be a directed strongly regular graph with parameters $(v,k,t,\lambda,\mu)$ and let $G$ be an automorphism group of $\Gamma$. Further, let $O_1,O_2, \ldots, O_b$ be the $G$-orbits on the set of vertices of $\Gamma$, and let $|O_i|=n_i$, $i=1, \ldots,b$, be the corresponding orbit lengths. If $R=[r_{ij}]$ is the row orbit matrix of $\Gamma$ with respect to the action of $G$, then the following hold
\begin{equation} \label{eq-orb-mat-1}
0 \le r_{ij} \le n_j, \ {\rm for} \ 1 \le i,j \le b,
\end{equation}
\begin{equation} \label{eq-orb-mat-2}
0 \le r_{ii} \le n_i -1, \ {\rm for} \ 1 \le i \le b,
\end{equation}
\begin{equation} \label{eq-orb-mat-3}
\sum_{j=1}^b r_{ij} = k, \ {\rm for} \ 1 \le i \le b,
\end{equation}
\begin{equation} \label{eq-orb-mat-4}
\sum_{s=1}^b  r_{is} r_{sj} = \delta_{ij}(t- \mu) + r_{ij} \lambda + (n_j - r_{ij}) \mu, \ {\rm for} \ 1 \le i,j \le b,
\end{equation}
where $\delta_{ij}$ is the Kornecker delta.
\end{thm}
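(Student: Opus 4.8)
The plan is to read off each entry of $R$ combinatorially and then reduce everything to the defining property of a $DSRG$. Fix an orbit $O_i$ and a representative $x\in O_i$. Because the orbit partition is equitable (equivalently, because $G\le Aut(\Gamma)$, so any $g\in G$ with $x^g=x'$ maps $N^+(x)\cap O_j$ bijectively onto $N^+(x')\cap O_j$), the number $r_{ij}=|N^+(x)\cap O_j|$ does not depend on the choice of $x\in O_i$. With this interpretation, (\ref{eq-orb-mat-1}) is immediate, since $N^+(x)\cap O_j\subseteq O_j$ has at most $n_j$ elements; (\ref{eq-orb-mat-2}) follows because $\Gamma$ is simple, so $x\notin N^+(x)$ and hence $N^+(x)\cap O_i\subseteq O_i\setminus\{x\}$, which has $n_i-1$ elements; and (\ref{eq-orb-mat-3}) follows because $O_1,\dots,O_b$ partition $\mathcal V$ and $\Gamma$ is $k$-regular, so $\sum_{j=1}^b r_{ij}=\sum_{j=1}^b|N^+(x)\cap O_j|=|N^+(x)|=k$.

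For (\ref{eq-orb-mat-4}) I would invoke the chain of identities established just before the theorem, which shows $\sum_{s=1}^b r_{is}r_{sj}=\sum_{y\in O_j}|N^+(x)\cap N^-(y)|$. The quantity $|N^+(x)\cap N^-(y)|$ is precisely the number of directed paths of length $2$ from $x$ to $y$, so by the definition of a $DSRG$ it equals $t$ if $y=x$, $\lambda$ if there is an arc $x\to y$, and $\mu$ if there is no such arc. Now split the sum over $y\in O_j$ accordingly. If $i\neq j$, then no $y\in O_j$ equals $x$; exactly $r_{ij}$ of the $y$'s are out-neighbours of $x$ and the remaining $n_j-r_{ij}$ are not, which gives (\ref{eq-row-prod-1}). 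If $i=j$, then exactly one $y$ (namely $x$) contributes $t$; of the remaining $n_i-1$ vertices exactly $r_{ii}$ are out-neighbours of $x$ — none of them equal to $x$, again by simplicity — and $n_i-1-r_{ii}$ are not, which gives (\ref{eq-row-prod-2}). Finally, rewriting the right-hand side of (\ref{eq-row-prod-2}) as $t+r_{ij}\lambda+(n_j-r_{ij}-1)\mu=(t-\mu)+r_{ij}\lambda+(n_j-r_{ij})\mu$ shows that both cases are unified by the single expression $\delta_{ij}(t-\mu)+r_{ij}\lambda+(n_j-r_{ij})\mu$, which is exactly (\ref{eq-orb-mat-4}).

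I do not expect a genuine obstacle here: the only substantive computation — the interchange of summations yielding $\sum_{s}r_{is}r_{sj}=\sum_{y\in O_j}|N^+(x)\cap N^-(y)|$ — is already carried out in the text preceding the theorem, and the rest is bookkeeping. The two points that need a moment of care are the well-definedness of the entries $r_{ij}$ (which is where $G\le Aut(\Gamma)$ enters) and keeping the loop-free hypothesis in view, so that the ``$-1$'' appearing in (\ref{eq-orb-mat-2}) and in the diagonal case of (\ref{eq-orb-mat-4}) is correctly accounted for.
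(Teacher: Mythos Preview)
Your proof is correct and follows essentially the same approach as the paper: the paper likewise derives (\ref{eq-orb-mat-1})--(\ref{eq-orb-mat-3}) directly from the definition of the row orbit matrix and $k$-regularity, and obtains (\ref{eq-orb-mat-4}) by invoking the pre-theorem computation $\sum_s r_{is}r_{sj}=\sum_{y\in O_j}|N^+(x)\cap N^-(y)|$ together with (\ref{eq-row-prod-1}) and (\ref{eq-row-prod-2}). Your write-up is somewhat more explicit about the well-definedness of $r_{ij}$ and the role of simplicity, but the underlying argument is the same.
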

\begin{proof}
The equations (\ref{eq-orb-mat-1}) and (\ref{eq-orb-mat-2}) follow directly from the definition of a row orbit matrix, and the equation (\ref{eq-orb-mat-3}) follows from the fact that every vertex of 
$\Gamma$ has the out-degree equal to $k$. The equation (\ref{eq-orb-mat-4}) follow from the equations (\ref{eq-row-prod-1}) and (\ref{eq-row-prod-2}).
\end{proof}

The following corollary follows from Theorem \ref{thm-row-om} and the equation $r_{ij}n_i=c_{ij}n_j$.

\begin{cor} \label{cor-col-om}
Let $\Gamma$ be a directed strongly regular graph with parameters $(v,k,t,\lambda,\mu)$ and let $G$ be an automorphism group of $\Gamma$. Further, let $O_1,O_2, \ldots, O_b$ be the $G$-orbits on the set of vertices of $\Gamma$, and let $|O_i|=n_i$, $i=1, \ldots,b$, be the corresponding orbit lengths. If $C=[c_{ij}]$ is the column orbit matrix of $\Gamma$ with respect to the action of $G$, then the following hold
\begin{equation} \label{eq-orb-mat-1}
0 \le c_{ij} \le n_i, \ {\rm for} \ 1 \le i,j \le b,
\end{equation}
\begin{equation} \label{eq-orb-mat-2}
0 \le c_{ii} \le n_i -1, \ {\rm for} \ 1 \le i \le b,
\end{equation}
\begin{equation} \label{eq-orb-mat-3}
\sum_{i=1}^b c_{ij} = k, \ {\rm for} \ 1 \le j \le b,
\end{equation}
\begin{equation} \label{eq-orb-mat-4}
\sum_{s=1}^b  c_{is} c_{sj} = \delta_{ij}(t- \mu) + c_{ij} \lambda + (n_i - c_{ij}) \mu, \ {\rm for} \ 1 \le i,j \le b,
\end{equation}
where $\delta_{ij}$ is the Kornecker delta.
\end{cor}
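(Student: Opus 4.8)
The plan is to read off each of the four relations for $C$ from the corresponding relation for $R$ in Theorem~\ref{thm-row-om}, using only the identity $r_{ij}n_i=c_{ij}n_j$, which I rewrite as $c_{ij}=\frac{n_i}{n_j}\,r_{ij}$ and substitute throughout. An essentially equivalent but more structural route would be to observe that the digraph $\Gamma'$ obtained from $\Gamma$ by reversing every arc is again a $DSRG(v,k,t,\lambda,\mu)$ (transpose the two defining matrix equations, using $I^{\top}=I$, $J^{\top}=J$; note $\Gamma'$ is still simple and $k$-regular since $A^{\top}$ has the same entries as $A$), that $G$ acts on $\Gamma'$ with the same vertex orbits $O_1,\dots,O_b$, and that the $(i,j)$ block of $A^{\top}$ is the transpose of the $(j,i)$ block of $A$; hence the row orbit matrix of $\Gamma'$ is exactly $C^{\top}$, and applying Theorem~\ref{thm-row-om} to $\Gamma'$ and relabelling $i\leftrightarrow j$ yields the four statements for $C$ at once. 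I would record this as a remark but carry out the direct substitution.

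For the bounds: from $0\le r_{ij}\le n_j$ and $c_{ij}=\frac{n_i}{n_j}r_{ij}$ we obtain $0\le c_{ij}\le\frac{n_i}{n_j}\,n_j=n_i$, while $c_{ii}=\frac{n_i}{n_i}r_{ii}=r_{ii}$ together with $0\le r_{ii}\le n_i-1$ gives $0\le c_{ii}\le n_i-1$. For the column-sum identity the cleanest argument mirrors the proof of the row-sum identity in Theorem~\ref{thm-row-om}: fixing a vertex $y\in O_j$, the number $c_{ij}$ is the common column sum of $A_{ij}$, i.e. $c_{ij}=|N^{-}(y)\cap O_i|$, so $\sum_{i=1}^{b}c_{ij}=|N^{-}(y)|=k$ because $O_1,\dots,O_b$ partition $\mathcal{V}$ and every vertex has in-degree $k$. (One may also write $\sum_i c_{ij}=\frac{1}{n_j}\sum_i r_{ij}n_i=\frac{1}{n_j}\cdot n_jk=k$, counting arcs into $O_j$.)

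The only step carrying any content is the quadratic relation. Substituting $c_{is}=\frac{n_i}{n_s}r_{is}$ and $c_{sj}=\frac{n_s}{n_j}r_{sj}$, the factor $n_s$ cancels and, by the fourth relation of Theorem~\ref{thm-row-om},
\[
\sum_{s=1}^{b}c_{is}c_{sj}=\frac{n_i}{n_j}\sum_{s=1}^{b}r_{is}r_{sj}
=\frac{n_i}{n_j}\Bigl(\delta_{ij}(t-\mu)+r_{ij}\lambda+(n_j-r_{ij})\mu\Bigr).
\]
Distributing the scalar $\frac{n_i}{n_j}$ finishes the proof: the term $\frac{n_i}{n_j}\delta_{ij}(t-\mu)$ equals $\delta_{ij}(t-\mu)$ since $n_i=n_j$ whenever $i=j$; the term $\frac{n_i}{n_j}r_{ij}\lambda$ equals $c_{ij}\lambda$; and $\frac{n_i}{n_j}(n_j-r_{ij})\mu=\bigl(n_i-\frac{n_i}{n_j}r_{ij}\bigr)\mu=(n_i-c_{ij})\mu$, so $\sum_{s}c_{is}c_{sj}=\delta_{ij}(t-\mu)+c_{ij}\lambda+(n_i-c_{ij})\mu$.

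I do not expect a genuine obstacle here: the content is borne entirely by Theorem~\ref{thm-row-om} and the relation $r_{ij}n_i=c_{ij}n_j$. The one point requiring care is the bookkeeping in the last step — one must apply $c_{ij}=\frac{n_i}{n_j}r_{ij}$ in the reverse direction to recognise $c_{ij}$ inside the $\lambda$- and $\mu$-terms, and observe that the Kronecker-delta term is unaffected by the scalar $n_i/n_j$ precisely because that scalar equals $1$ on the diagonal.
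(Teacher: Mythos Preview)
Your proof is correct and follows exactly the approach the paper indicates: the paper merely states that the corollary ``follows from Theorem~\ref{thm-row-om} and the equation $r_{ij}n_i=c_{ij}n_j$,'' and your direct substitution $c_{ij}=\frac{n_i}{n_j}r_{ij}$ into each of the four relations carries this out in full detail. Your alternative route via the reversed digraph $\Gamma'$ is also valid and is a nice structural explanation, though the paper does not mention it.
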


Based on Theorem \ref{thm-row-om} and Corollary \ref{cor-col-om}, we introduce the following definitions.

\begin{defi} \label{defi-row-om}
Let  $n_i$, $i=1, \ldots,b$, be positive integers such that $\sum_{i=1}^b n_i=v$. A $(b \times b)$-matrix $R = [r_{ij}]$, where $r_{ij}$, $1 \le i,j \le b$, are non-negative integers satisfying conditions
\begin{equation*} \label{eq-orb-mat-1}
0 \le r_{ij} \le n_j, \ {\rm for} \ 1 \le i,j \le b,
\end{equation*}
\begin{equation*} \label{eq-orb-mat-2}
0 \le r_{ii} \le n_i -1, \ {\rm for} \ 1 \le i \le b,
\end{equation*}
\begin{equation*} \label{eq-orb-mat-3}
\sum_{j=1}^b r_{ij} = k, \ {\rm for} \ 1 \le i \le b,
\end{equation*}
\begin{equation*} \label{eq-orb-mat-3}
\sum_{i=1}^b \frac{n_i}{n_j}r_{ij} = k, \ {\rm for} \ 1 \le j \le b,
\end{equation*}
\begin{equation*} \label{eq-orb-mat-4}
\sum_{s=1}^b  r_{is} r_{sj} = \delta_{ij}(t- \mu) + r_{ij} \lambda + (n_j - r_{ij}) \mu, \ {\rm for} \ 1 \le i,j \le b,
\end{equation*}
where $\delta_{ij}$ is the Kornecker delta, is called a row orbit matrix for a directed strongly regular graph with parameters $(v,k,t,\lambda,\mu)$ 
and the orbit lengths distribution $(n_1, n_2, \ldots n_b)$.
\end{defi}

\begin{defi} \label{defi-col-om}
Let  $n_i$, $i=1, \ldots,b$, be positive integers such that $\sum_{i=1}^b n_i=v$. A $(b \times b)$-matrix $C = [c_{ij}]$, where $c_{ij}$, $1 \le i,j \le b$, are non-negative integers satisfying conditions
\begin{equation*} \label{eq-orb-mat-1}
0 \le c_{ij} \le n_i, \ {\rm for} \ 1 \le i,j \le b,
\end{equation*}
\begin{equation*} \label{eq-orb-mat-2}
0 \le c_{ii} \le n_i -1, \ {\rm for} \ 1 \le i \le b,
\end{equation*}
\begin{equation*} \label{eq-orb-mat-3}
\sum_{i=1}^b c_{ij} = k, \ {\rm for} \ 1 \le j \le b,
\end{equation*}
\begin{equation*} \label{eq-orb-mat-3}
\sum_{j=1}^b \frac{n_j}{n_i} c_{ij} = k, \ {\rm for} \ 1 \le j \le b,
\end{equation*}
\begin{equation*} \label{eq-orb-mat-4}
\sum_{s=1}^b  c_{is} c_{sj} = \delta_{ij}(t- \mu) + c_{ij} \lambda + (n_i - c_{ij}) \mu, \ {\rm for} \ 1 \le i,j \le b,
\end{equation*}
where $\delta_{ij}$ is the Kornecker delta, is called a column orbit matrix for a directed strongly regular graph with parameters $(v,k,t,\lambda,\mu)$ 
and the orbit lengths distribution $(n_1, n_2, \ldots n_b)$.
\end{defi}

\begin{rem} \label{rem-orb-mat}
Orbit matrices from Definitions \ref{defi-row-om} and \ref{defi-col-om} may or may not correspond to a directed strongly regular graph with parameters $(v,k,t,\lambda,\mu)$. Those matrices can be used for a construction of directed strongly regular graphs with a presumed automorphism group in a similar way as orbit matrices of block designs are used for a construction of block designs 
(see \cite{dc-mop, janko-gaeta, janko-tvt}) and orbit matrices of strongly regular graphs are used for a construction of strongly regular graphs (see \cite{lam, dc-mm, dc-as}). In this paper, we use orbit matrix of directed strongly regular graphs, together with a genetic algorithm, to construct directed strongly regular graphs with a presumed automorphism group.
\end{rem}

\section{Genetic algorithm} \label{gen-alg}

Genetic algorithms (GA) are search and optimization heuristic  population based  methods which are inspired by the natural evolution process. In each step of the algorithm, a subset of the whole solution space, called \textit{population}, is being treated. The population consists of \textit{individuals}, and each individual has \textit{genes} that can be mutated and altered. 
Instead of finding an optimal solution within the whole solution space, the algorithm concentrates on optimizing the selected population. Every individual represents a possible solution (optimum), which is evaluated using the \textit{fitness function}. In each iteration of the algorithm, a certain number of best-ranked individuals - \textit{parents} is selected to create new better individuals - \textit{children}. Children are created by a certain type of recombination - \textit{crossover} and they replace the worst-ranked individuals in the population, in order to increase the chances for convergence to the local optimum. After children are obtained, a \textit{mutation} operator is allowed to occur (for the purpose to escape from a local optimum) and the next generation of the population is created. The process is repeated until a termination condition is reached. Common terminating conditions are: an individual is found that satisfies optimum criteria, stagnation takes place in the sense that successive iterations no longer produce better results, or a predefined maximal number of generations is reached. This method has been shown to be very efficient for solving a variety of optimization problems, including some NP-hard problems (see \cite{CorMat}), as well as problems where any feasible solution is optimal - as it is in the case of construction of directed strongly regular graphs. 

A genetic algorithm has been used in \cite{dc-tz} and \cite{bibd-gen} to construct block designs and in \cite{dc-ddd} to find unitals as substructures of symmetric designs.
In this paper, we will describe a method that uses a genetic algorithm for a construction of directed strongly regular graphs  
with a prescribed automorphism group using orbit matrices. This method is applied to construct 
directed strongly regular graphs with parameters $(36,10,5,2,3)$, $(52,12, 3, 2, 3)$, $(52, 15, 6, 5, 6)$, $(55, 20, 8, 6, 8)$ and $(55, 24, 12, 11, 10)$. To the best of our knowledge, this is the first time genetic algorithms are used for constructing directed strongly regular graphs.

\section{Combining orbit matrices and genetic algorithm to construct directed strongly regular graphs} \label{construction} 

Let $(v,k,t,\lambda,\mu)$ be admissible parameters for a directed strongly regular graph. Further, let $G$ be a finite group, let $(n_1,n_2,\ldots,n_b)$ be the vertex orbit lengths distribution under the action of $G$ for a directed strongly regular graph $(v,k,t,\lambda,\mu)$, and let $M$ be a row orbit matrix for parameters $(v,k,t,\lambda,\mu)$ and orbit lengths distribution $(n_1,n_2,\ldots,n_b)$. Our goal is to construct  adjacency matrices of directed strongly regular graphs with parameters $(v,k,t,\lambda,\mu)$ which admit the action of an automorphism group isomorphic to $G$ with the orbit lengths distribution $(n_1,n_2,\ldots,n_b)$, such that this action produces the orbit matrix $M$. In this paper we prescribe actions of prime order groups, i.e. groups isomorphic to $Z_p, p\in \mathbb{P}$. While indexing the orbit matrix $M$, we will use a genetic algorithm, which means that our search will not be exhaustive. In other words, there is a possibility that this method will not produce all directed strongly regular graphs, up to isomorphism, which can be constructed from the given orbit matrix $M$. 
While testing the algorithm on examples of known directed strongly regular graphs for various parameters $(v,k,t,\lambda,\mu)$, we have determined optimal values of parameters of a genetic algorithm for construction of directed  strongly regular graphs.

In this construction, the individuals are adjacency matrices of simple directed $k$-regular graphs on $v$ vertices which allow the action of a group $G$ considering the row orbit matrix $M$. In other words, the individuals are $(0,1)$-matrices of dimensions $v\times v$ with zeroes on the diagonal, whose row sums and column sums are $k$ and which allow the action of the group $G$ with the orbit lengths distribution $(n_1,n_2,\ldots,n_b)$, so that the action produce the orbit matrix  $M$. Our aim is to take an initial popluation that consists of a certain number of such randomly generated individuals  and, by using a genetic algorithm, to construct an individual that represents an adjacency matrix of a directed strongly regular graph $DSRG(v,k,t,\lambda,\mu)$. 

A gene of such an individual is the union of rows 
of the adjacency matrix which correspond to one vertex orbit. 

A bit is a submatrix which represents the intersection of rows and columns of the adjacency matrix which correspond to two vertex orbits. That means that bits correspond to the elements of the orbit matrix. 
Every bit is determined by its first row, whereas the other rows are uniquely determined by the action of the group $G$ on the first row.

For some elements of the orbit matrix, the corresponding bits are uniquely determined (we will call them fixed bits), while from some other elements of the orbit matrix there are more possibilities to construct the corresponding bit (we will call them non-fixed bits). Similarly, we have fixed and non-fixed genes.

The fitness function is defined as follows. For every two distinct vertices $v_i$ and $v_j$, we set $x_{ij}$ to be the number of directed paths of length 2 from $v_i$ to $v_j$ i.e., the dot product of the $i^{th}$ row and $j^{th}$ column of the adjacency matrix.

The fitness function is
\[ \sum_{v_i,v_j \in {\mathcal V}} 
		\begin{cases} 
						min \{x_{ij}, t \}, 			 &\text{if }v_i=v_j,\\
						min \{x_{ij}, \lambda \},       &\text{if there is an arc from }v_i\text{ to }v_j,\\ 
						min \{x_{ij}, \mu \}, 		 &\text{if there is no arc from }v_i\text{ to }v_j,\\
		\end{cases} \]
where ${\mathcal V}$ is the set of vertices of a graph. 

For such a fitness function, an individual will be an adjacency matrix of a directed strongly regular graph with parameters $(v,k,t,\lambda,\mu)$ if and only if the value of its fitness is  
$$vt+vk\lambda+v(v-k-1)\mu.$$

This is because in a directed strongly regular graph with parameters $(v,k,t,\lambda,\mu)$ each of  $v$ vertices has $t$ directed paths of length 2 from itself to itself; each of $v$ vertices has arcs from itself to $k$ vertices and for each such arc there are $\lambda$ directed paths of length 2; finally each of $v$ vertices has no arcs from itself to $(v-k-1)$ vertices and for each such case there are $\mu$ directed paths of length 2.

With the fitness function defined in such a way, the problem of finding an optimal solution, that is, an adjacency matrix of a directed strongly regular graph, is a maximization problem. The fitness values of individuals over the generations should increase up to the maximal value $vt+vk\lambda+v(v-k-1)\mu.$ When matrices that attain the necessary fitness to be adjacency matrices of a directed strongly regular graph, an isomorphism check is conducted so that only mutually non-isomorphic directed strongly regular graphs remain.

The crossover is defined in a way that the genes at some positions of the first parent are replaced with the genes at the same positions of the second parent, and vice versa.

The mutation is performed in a way that one or more bits of an individual are replaced with new, randomly generated bits. That means that we randomly permute the first row in a bit and its other rows are determined by the action of the prescribed automorphism group. 

Sometimes a population gets stuck in a local optimum, causing a stagnation. In order to escape from a local optimum, we reset the algorithm. In our algorithm we have two kinds of resets, complete and partial, which is explained in more details in Section \ref{explanation}.

\subsection{Pseudocode of the algorithm} \label{alg}

Here we present the basic pseudocode of our algorithm that has proved to be successful in obtaining directed strongly regular graphs from orbit matrices with a certain prescribed automorphism group. 
The pseudocode of the algorithm will be explained in Section \ref{explanation}.

\bigskip

\noindent \textbf{Main function GA}
\begin{tight_enumerate}
	\item NrOfCompleteResets $\leftarrow$ 0
	\item NrOfPartialResets $\leftarrow$ 0
	\item \textbf{while} NrOfCompleteResets $<$ MaxNrOfCompleteResets
	\item StartingPopulation $\leftarrow$ empty
	\item \quad \textbf{while} NrOfPartialResets $<$ MaxNrOfPartialResets 
	\item \quad \quad Population $\leftarrow$ GeneratePopulation(POP,StartingPopulation)
	\item \quad \quad f\_best $\leftarrow$ BestFitness(Population)
	\item \quad \quad f\_bestNrOfRepeats $\leftarrow$ 1
	\item \quad \quad \textbf{while} f\_best $<$ FitnessForDSRG \textbf{and} NrOfGenerations $<$ MaxNrOfGenerations
	\item \quad \quad \quad Shuffle(Population)
	\item \quad \quad \quad WorkOnPopulation(Population)
	\item \quad \quad \quad increase NrOfGenerations
	\item \quad \quad \quad f\_best $\leftarrow$ BestFitness(Population)
	\item \quad \quad \quad \textbf{if} f\_best did not increase \textbf{then}
	\item \quad \quad \quad \quad increase f\_bestNrOfRepeats
	\item \quad \quad \quad \textbf{end if}
	\item \quad \quad \quad \textbf{if} f\_bestNrOfRepeats $=$ f\_bestNrOfRepeatsMax \textbf{then}
	\item \quad \quad \quad \quad StartingPopulation $\leftarrow$ predefined percentage of best-ranked individuals
	\item \quad \quad \quad \quad conduct a partial reset of population
	\item \quad \quad \quad \textbf{end if}
	\item \quad \quad \textbf{end while}
	\item \quad \quad increase NrOfPartialResets
	\item \quad \textbf{end while}
	\item \quad increase NrOfCompleteResets
	\item \textbf{end while}
\end{tight_enumerate}

\noindent\textbf{Function WorkOnPopulation}
\begin{tight_enumerate}
	\item $i$ $\leftarrow$ 1
	\item \textbf{while} $i$ $<$ POP 
	\item \quad WorkOnFour($i$,Population)
	\item \quad $i$ $\leftarrow$ $i+4$
	\item \textbf{end while}
\end{tight_enumerate}

\noindent\textbf{Function WorkOnFour}
\begin{tight_enumerate}
	\item Parents $\leftarrow$ two best-ranked among $i$th and $(i+4)$th individual
	\item \textbf{while} Parent1 $=$ Parent2
	\item \quad MutatedParent2 $\leftarrow$ Mutation(Parent2)
	\item \quad \textbf{if} MutatedParent2 not in Population 
	\item \quad \quad Parent2 $\leftarrow$ MutatedParent2
	\item \quad \textbf{end if}
	\item \textbf{end while} 
	\item Children $\leftarrow$ Crossover(Parents)
	\item \textbf{repeat} 
	\item \quad MutatedChild1 $\leftarrow$ Mutation(Child1)
	\item \textbf{until} not MutatedChild1 in Population
	\item Child1 $\leftarrow$ MutatedChild1
	\item \textbf{repeat} 
	\item \quad MutatedChild2 $\leftarrow$ Mutation(Child2)
	\item \textbf{until} not MutatedChild2 in Population
	\item Child2 $\leftarrow$ MutatedChild2
	\item two worst-ranked among $i$th and $(i+4)$th individual $\leftarrow$ Children
\end{tight_enumerate}

While running experiments we noticed that directed strongly regular graphs with different parameters respond differently to this basic algorithm. For example, directed strongly regular graphs with some parameters are obtained more quickly if just one random gene position is crossed over between parents, whereas other directed strongly regular graphs respond better to crossover of multiple random genes. Similarly, the algorithm set to run for some parameters shows less stagnation in a local maximum and needs fewer partial or complete resets in order to obtain a directed strongly regular graph if fewer bits are mutated, whereas other parameters respond better to a mutation of more bits.

In general, for directed strongly regular graphs with a small number of vertices the algorithm performs better if multiple gene positions are crossed over between parents, while in the case of directed strongly regular graphs with a larger number of vertices the algorithm performs better if only one gene position is crossed over.

Further, a construction was more effective for the parameters $(v,k,t,\lambda,\mu)$ where the ratio $\frac{v}{k}$ is bigger, than when it is smaller. With respect to prescribed groups, the algorithm performs better for the groups of smaller order. The best results were obtained for the groups of order two and three.

Performing a series of experiments to optimize the parameters of the genetic algorithm we concluded that optimal values are: 

\begin{tight_itemize}
	\item population size POP$\ =100$,
	\item MaxNrOfGenerations$\ =100\,000$,
	\item mutation probability $p_m=100\%$,
	\item crossover probability $p_c=100\%$,
	\item number of genes which participate in a crossover NrGenesForCrossover$\ =1$,
	\item number of bits to be mutated NrBitsForMutation$\ =1$,
	\item FitnessForDSRGNrOfRepeatsMax$\ =100$,
	\item MaxNrOfPartialResets$\ =10$,
	\item MaxNrOfCompleteResets$\ =100$,
	\item StartingPercentage$\ =10\%$.
\end{tight_itemize}

\subsection{Explanation of the pseudocode of the algorithm} \label{explanation}

The main function used in this algorithm is the GA function. The algorithm can run up to a number of predefined complete resets (MaxNrOfCompleteResets) and partial resets are possible up to some number of predefined partial resets (MaxNrOfPartialResets). At the start of the algorithm, a population of a certain number of individuals (POP) is created as a random population, satisfying constraints of the orbit matrix. In each iteration of the algorithm, we work on this population in order to create individuals with better properties. The algorithm is set to run until an individual that meets our criteria (i.e. is an adjacency matrix of a directed strongly regular graph) is obtained, until stagnation takes place (in that case we make a partial reset of the population) or until the limit for a number of complete resets (MaxNrOfCompleteResets) is reached.  

An individual's fitness is assessed by the FitnessOfIndividual function, and an individual represents an adjacency matrix of a directed strongly regular graph if its fitness is FitnessForDSRG. 

A partial reset of the population means that a predefined percentage of the best-ranked individuals in the population (StartingPopulation) are kept in that population along with new randomly generated individuals. A partial reset of the population is conducted in order to escape from stagnation, which means that a few generations in a row attain the same local optimum (in our case a local maximum considering the way our fitness function is defined). In order to detect this, we define the best fitness (f\_best) in one generation to be the maximum of fitnesses of all individuals in that generation. Then local maximum is detected by the algorithm if a certain predefined number of generations (f\_bestNrOfRepeatsMax) stagnates at some best fitness (f\_best). A partial reset can also be conducted if a predefined maximal number of generations MaxNrOfGenerations is reached. If no solution is obtained after this predefined number of partial resets, then a complete reset is conducted. 

Individuals are created as block matrices based on the orbit matrix and a prescribed automorphism group. These matrices are created semi-randomly in the following sense: each entry of the orbit matrix is expanded respecting the vertex length distributions and this expansion is either unique (i.e. that parts of orbit matrices are fixed) or it can be expanded in more than one way. An adequate expansion of all entries of the orbit matrix may produce the adjacency matrix of a directed strongly regular graph. Considering the many possible candidates for an adjacency matrix of a directed strongly regular graph among these expansions, in cases where exhaustive search cannot be conducted, heuristic algorithms are a good option. 

The NewIndividual function creates a block matrix of semi-randomly generated genes using a NewGene function (each gene corresponds to a row and column of the orbit matrix corresponding to a representative of a vertex orbit). These genes consist of bits (which correspond to entries in that row and column of the orbit matrix) which are generated using the NewBit function.

The NewBit function expands an entry of the orbit matrix into a matrix with the desired number of rows and columns (respecting the orbit lengths distributions). The first row of that matrix is constructed randomly and all other rows are determined by the action of a prescribed automorphism group. Entries in the fixed part of the orbit matrix, of course, produce a unique such matrix.

The GeneratePopulation function generates a new population of POP individuals, either from a blank population or with already existing better individuals after a partial reset.

The selection used in the algorithm is a 4-tournament selection (for that reason, POP must be a multiple of 4) where in each iteration of the algorithm population gets divided into groups of four individuals, among which two better (with respect to the fitness function) are selected to be parents in the crossover. These two parents produce children which are then mutated and those two individuals replace two worse individuals in the tournament. During this procedure, if parents chosen for crossover are equal, one of the parents gets mutated until they become different in order to maintain diversity in the population. Similarly, when children are mutated, it is done so that the resulting individual is different from any other individual in the current population.

After each iteration of the algorithm, the whole population is shuffled to ensure that the individuals chosen to be parents from one generation are not the same as the parents from the previous generation.

The algorithm uses two types of modifications on individuals: crossover and mutations. The Mutation function replaces one or more bits in a gene of an individual with another bit taking into consideration the expected fitness of that gene. 
The Crossover function takes two better individuals from a 4-tournament as parents and recombines their genes at a certain number of positions (NrOfPositionsForCrossover). Those positions are chosen randomly.

\section{Construction of directed strongly regular graphs with parameters $(36,10,5,2,3)$, $(52,12, 3, 2, 3)$, $(52, 15, 6, 5, 6)$, \\ $(55, 20, 8, 6, 8)$ and $(55, 24, 12, 11, 10)$} \label{newDSRG}

We took examples of directed strongly regular graphs constructed by L. K. J\o rgensen \cite{Jorgensen} and examples that we constructed using the method given in \cite{dc-vmc-as}.
From those graphs we obtained all their orbit matrices for presumed action of groups $Z_2$ and $Z_3$. Taking those orbit matrices as input for our genetic algorithm, we obtained many more non-isomorphic directed strongly regular graphs with the same parameters. We list the results below.

From a $DSRG(36,10,5,2,3)$ by L. K. J\o rgensen we obtain the following DSRGs. 
\begin{table}[H]
\begin{center}
\begin{tabular}{|c|c|c|c|}
\hline
	$|\text{Aut}(\Gamma)|$ & $\text{Aut}(\Gamma)$ structure & \#DSRGs  \\ 
\hline
\hline
	72 & $(S_3 \times S_3) : Z_2$ & 2 \\
	36 & $S_3 \times S_3$ & 6 \\
	18 & $Z_3 \times S_3$ & 16 \\
	18 & $E_9 : Z_2$ & 4 \\
	12 & $D_6$ & 58 \\
	9 & $E_9$ & 16 \\
	8 & $D_4$ & 7 \\
	6 & $S_3$ & 70 \\
	6 & $Z_6$ & 158 \\
	4 & $Z_4$ & 12 \\
	4 & $E_4$ & 109 \\
	3 & $Z_3$ & 191 \\
	2 & $Z_2$ & 180 \\
\hline
\hline
\end{tabular}
\caption{829 $DSRG(36,10,5,2,3)$}
\label{DSRG-36-10-5-2-3}
\end{center}
\end{table}

From two $DSRG(52,12,3,2,3)$ that we constructed from the group $PSL(3,3)$ using the method described in \cite{dc-vmc-as}, we obtain the following. 
\begin{table}[H]
\begin{center}
\begin{tabular}{|c|c|c|c|}
\hline
	$|\text{Aut}(\Gamma)|$ & $\text{Aut}(\Gamma)$ structure & \#$DSRG$s  \\ 
\hline
\hline
	5616 & $PSL(3,3)$ & 2 \\
	4 & $Z_4$ & 1 \\
	4 & $E_4$ & 11 \\
	3 & $Z_3$ & 41 \\
	2 & $Z_2$ & 336 \\
\hline
\hline
\end{tabular}
\caption{391 $DSRG(52,12,3,2,3)$}
\label{DSRG-52-12-3-2-3}
\end{center}
\end{table}

From a $DSRG(52,15,6,5,4)$ constructed from $PSL(3,3)$ using the method from \cite{dc-vmc-as}, we obtain the following results.
\begin{table}[H]
\begin{center}
\begin{tabular}{|c|c|c|c|}
\hline
	$|\text{Aut}(\Gamma)|$ & $\text{Aut}(\Gamma)$ structure & \#$DSRG$s  \\ 
\hline
\hline
	5616 & $PSL(3,3)$ & 1 \\
	3 & $Z_3$ & 164 \\
\hline
\hline
\end{tabular}
\caption{165 $DSRG(52,15,6,5,4)$}
\label{DSRG-52-15-6-5-4}
\end{center}
\end{table}

From two $DSRG(55,20,8,6,8)$ constructed from $PSL(2,11)$ with the method from \cite{dc-vmc-as}, we obtain the following DSRGs. 
\begin{table}[H]
\begin{center}
\begin{tabular}{|c|c|c|c|}
\hline
	$|\text{Aut}(\Gamma)|$ & $\text{Aut}(\Gamma)$ structure & \#$DSRG$s  \\ 
\hline
\hline
	660 & $PSL(2,11)$ & 2 \\
	8 & $D_4$ & 7 \\
	2 & $Z_2$ & 56 \\
\hline
\hline
\end{tabular}
\caption{65 $DSRG(55,20,8,6,8)$}
\label{DSRG-55-20-8-6-8}
\end{center}
\end{table}

From a $DSRG(55,24,12,11,10)$ constructed by the method from \cite{dc-vmc-as}, using the group $PSL(2,11)$, we got the following DSRGs.
\begin{table}[H]
\begin{center}
\begin{tabular}{|c|c|c|c|}
\hline
	$|\text{Aut}(\Gamma)|$ & $\text{Aut}(\Gamma)$ structure & \#$DSRG$s  \\ 
\hline
\hline
	660 & $PSL(2,11)$ & 1 \\
	3 & $Z_3$ & 20 \\
      2 & $Z_2$ & 1\\
\hline
\hline
\end{tabular}
\caption{22 $DSRG(55,24,12,11,10)$}
\label{DSRG-55-24-12-11-10}
\end{center}
\end{table}


\noindent The directed strongly regular graphs constructed are available at: 

\url{https://github.com/TinZrinski/structures/blob/main/DSRG-36-10-5-2-3.txt},

\url{https://github.com/TinZrinski/structures/blob/main/DSRG-52-12-3-2-3.txt},

\url{https://github.com/TinZrinski/structures/blob/main/DSRG-52-15-6-5-4.txt},

\url{https://github.com/TinZrinski/structures/blob/main/DSRG-55-20-8-6-8.txt},

\url{https://github.com/TinZrinski/structures/blob/main/DSRG-55-24-12-11-10.txt}.









\vspace*{0.5cm}

\end{document}